\def\url@leostyle{%
  \@ifundefined{selectfont}{\def\UrlFont{\sf}}{\def\UrlFont{\scriptsize\ttfamily}}}
\newtheorem{theorem}{Theorem}[section]
\newtheorem{corollary}[theorem]{Corollary} 
\newtheorem{lemma}[theorem]{Lemma} 
\newtheorem{proposition}[theorem]{Proposition} 
\newtheorem*{mainthm}{Main Theorem}
\theoremstyle{definition}
\newtheorem{definition}[theorem]{Definition} 
\newtheorem{question}[theorem]{Question}
\def\nnn{\mathbb{N}}
\def\rrr{\mathbb{R}}
\def\zzz{\mathbb{Z}}
\def\rpn{\rrr P^n}
\def\m{\mathcal{M}}
\def\meq{\mathcal{M}_{\mathrm{eq}}}
\def\deph{d_{\mathrm{epH}}}
\def\co{\colon\thinspace}
\def\nsub{\mathrel{\unlhd}}
\def\st{\mathrel{}\middle|\mathrel{}}
\DeclareMathOperator{\dist}{dist}
\DeclareMathOperator{\vol}{vol}
\DeclareMathOperator{\rad}{rad}
\DeclareMathOperator{\isom}{Isom}
\DeclareMathOperator{\diam}{diam}
\newcommand{\Orth}[1]{\mathrm{O}(#1)}
\begin{document}

\title[Convergence of isometries]{Convergence of isometries, with semicontinuity of symmetry of Alexandrov spaces}

\author{John Harvey}
\address{Mathematisches Institut, Universit\"{a}t M\"{u}nster, Einsteinstr. 62, 48149 M\"{u}nster, Germany. }
\email{harveyj@uni-muenster.de}

\subjclass[2010]{Primary: 53C23; Secondary: 53C20} 
\thanks{The author is supported by the SFB ``Groups, Geometry and Actions''.}

\date{\today}

\begin{abstract}
The equivariant Gromov--Hausdorff convergence of metric spaces is studied. Where all isometry groups under consideration are compact Lie, it is shown that an upper bound on the dimension of the group guarantees that the convergence is by Lie homomorphisms. Additional lower bounds on curvature and volume strengthen this result to convergence by monomorphisms, so that symmetries can only increase on passing to the limit.
\end{abstract}

\maketitle

\section{Introduction}

The equivariant Gromov--Hausdorff topology allows one to study the convergence of metric spaces while keeping track of their symmetries.
The definition of this topology involves functions between the metric spaces and between the groups of isometries.
These functions are not required to be continuous, or to be group homomorphisms.

The main theorem of this article is that, assuming that all groups under consideration are compact Lie groups, convergence is always by homomorphisms of Lie groups.

\begin{mainthm}
	Let $(X_i, p_i, G_i)$ be a sequence of pointed group metric spaces, converging to $(X_{\omega}, p_{\omega}, G_{\omega})$ in the equivariant pointed Gromov--Hausdorff topology. 
	Assume that $G_i$ and $G$ are compact Lie groups, with a uniform upper bound on the dimension of the $G_i$.
	Then, for large enough $i$, the functions $G_i \to G_{\omega}$ which demonstrate the convergence may be chosen to be homomorphisms of Lie groups.	
\end{mainthm}

Note that this result does not require any assumptions about the metric spaces themselves (other than that metric balls centered at the distinguished points be relatively compact, which is required to define the topology).
The only assumption is on the groups themselves.

The functions $G_{\omega} \to G_i$ are less tractable.
A simple example such as the convergence of the cyclic group acting on the circle to the action of the full circle, $(S^1,\zzz_p) \to (S^1,S^1)$ as $p \to \infty$, shows that homomorphisms need not exist in the reverse direction.

Where the homomorphisms have a non-trivial kernel, some symmetries are lost.
It is easy to find examples where injectivity fails, either by using spaces with infinite topology (such as Hawaiian earrings) or by shrinking the orbits of the group action so the spaces converge to the orbit space with a trivial group action.

If the spaces $X_i$ are Riemannian manifolds with a lower sectional curvature and volume bound, or, more generally, Alexandrov spaces, then it is shown that the homomorphisms are always injective (Proposition \ref{p:preservation}).

It would be of interest to see whether similar results continue to hold with weaker hypotheses, for example, using non-compact Lie groups, or only considering a Ricci curvature lower bound.

Finally, in Proposition \ref{p:maxvol}, the injectivity result is applied to achieve an understanding of how much symmetry a closed Riemannian manifold can retain when its volume is large relative to its sectional curvature and radius.

\section{Equivariant pointed Gromov--Hausdorff topology}

The equivariant Gromov--Hausdorff topology is a coarse method of defining the convergence of a sequence of metric spaces with isometric group actions. 
It has been used in the study of fundamental groups of certain classes of manifolds \cite{fyannals, enniswei} and in the study of sequences of Riemannian orbifolds \cite{fukaya, horbi}.
A slightly different definition, developed independently by Bestvina \cite{bestvina} and Paulin \cite{paulin}, is used in geometric group theory.
In the setting of geometric group theory, the group is generally fixed. 
However, the definition used in the current work is designed for situations where the object of study is not the group itself, but rather the symmetries of the space.

It is a generalization of the standard Gromov--Hausdorff topology (in fact, a metric) on the set of isometry classes of compact metric spaces \cite{gromov}, which itself generalizes the Hausdorff metric on the closed subsets of a compact metric space.

Let $\m$ be the set of all isometry classes of pointed metric spaces $(X,p)$ (that is, $X$ is a metric space and $p$ is a distinguished point of $X$) such that for each $r > 0$ the open ball $B(p,r)$ is relatively compact. 

Now consider the set of triples $(X,p, \Gamma)$ where $(X,p) \in \m$ and $\Gamma$ is a closed group of isometries acting effectively on $X$. 
Such triples will be referred to as \emph{pointed group metric spaces}.
Say that two pointed group metric spaces are equivalent if they are equivariantly pointed isometric up to an automorphism of the group.
Let $\meq$ be the set of equivalence classes of pointed group metric spaces.

The equivariant pointed Gromov--Hausdorff distance was first defined by Fukaya \cite{fukaya}, and achieved its final form some years later in his work with Yamaguchi \cite{fyannals}. 

If $(X,p,\Gamma) \in \meq$, then let $\Gamma(r) \subset \Gamma$ be $\left\lbrace \gamma \in \Gamma \st \gamma p \in B(p,r) \right\rbrace $.

\begin{definition}Let $(X,p,\Gamma),(Y,q, \Lambda) \in \meq$. An \emph{equivariant pointed Gromov--Hausdorff $\epsilon$--approximation} is a triple $(f,\phi,\psi)$ of functions 
	\begin{align*}
		f \co B(p,1/\epsilon) &\to Y \\
		\phi \co \Gamma(1/\epsilon) &\to \Lambda(1/\epsilon)\\
		\psi \co \Lambda(1/\epsilon) &\to \Gamma(1/\epsilon)
		\end{align*}
	 such that
	\begin{enumerate}
		\item $f(p) =q$;
		\item the $\epsilon$--neighborhood of $B(p,1/\epsilon)$ contains $B(q,1/\epsilon)$;
		\item if $x,y \in B(p,1/\epsilon)$ then $\left| \dist(x,y) - \dist(f(x),f(y)) \right|  < \epsilon$;
		\item if $\gamma \in \Gamma(1/\epsilon)$, and both $x, \gamma x \in B(p,1/\epsilon)$, then $$\dist(f(\gamma x) , \phi(\gamma) f(x) )  < \epsilon;$$
		\item if $\lambda \in \Lambda(1/\epsilon)$, and both $x, \psi(\lambda) x \in B(p,1/\epsilon)$, then $$\dist(f(\psi(\lambda) x) , \lambda f(x) )  < \epsilon.$$
	\end{enumerate}
\end{definition}

Note that these functions need not be morphisms from the relevant category. 
In particular, they need not be continuous, or respect the group structure in any way.

The equivariant pointed Gromov--Hausdorff distance is defined from these approximations by setting  $\deph \left( (X,p,\Gamma),(Y,q, \Lambda) \right)$ equal to the infinum of all $\epsilon$ such that equivariant pointed Gromov--Hausdorff $\epsilon$--approximations exist from $(X,p,\Gamma)$ to $(Y,q, \Lambda)$ and from $(Y,q, \Lambda)$ to $(X,p,\Gamma)$.

By \cite[Proposition 3.6]{fyannals}, given a sequence in $\meq$, if the sequence of underlying pointed metric spaces is convergent in the pointed Gromov--Hausdorff topology, then there is a subsequence which is convergent in the equivariant pointed Gromov--Hausdorff topology.

By \cite[Theorem 2.1]{fukaya}, the sequence of orbit spaces corresponding to a convergent sequence in $\meq$ must itself converge in the usual Gromov--Hausdorff topology.

\section{Approximating symmetries with group homomorphisms}

This section contains the proof of the following theorem.

\begin{theorem}\label{t:equicontinuity}
	Let $(X_i, p_i, G_i)$ be a sequence of pointed group metric spaces in $\meq$, converging to $(X_{\omega}, p_{\omega}, G_{\omega})$ in the equivariant pointed Gromov--Hausdorff topology. 
	Assume that $G_i$ and $G_{\omega}$ are compact Lie groups, with a uniform upper bound on the dimension of the $G_i$.
	Then, for large enough $i$, the functions $G_i \to G_{\omega}$ which demonstrate the convergence may be chosen to be homomorphisms of Lie groups.	
\end{theorem}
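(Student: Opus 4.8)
The plan is to show that the maps $\phi_i$ are approximate homomorphisms whose multiplicative defect tends to zero, and then to correct each of them to a genuine Lie homomorphism without spoiling the approximation. Fix a bi-invariant metric $d_\omega$ on the compact Lie group $G_\omega$. Since each $G_i$ is compact its orbit $G_i p_i$ is bounded, and as these orbits converge to the bounded orbit $G_\omega p_\omega$ their radii are uniformly bounded; hence for large $i$ one has $1/\epsilon_i$ larger than the orbit radius, so $G_i(1/\epsilon_i)=G_i$ and $\phi_i$ is defined on all of $G_i$. The one structural fact I need about the target is a \emph{recognition principle}: because $G_\omega$ is compact and acts effectively by isometries on $X_\omega$, for every $\eta>0$ there are $R,\delta>0$, depending only on the action of $G_\omega$ on $X_\omega$, such that any two elements of $G_\omega$ whose displacements agree to within $\delta$ on $B(p_\omega,R)$ are $d_\omega$-closer than $\eta$; conversely, on any \emph{fixed} ball, $d_\omega$-closeness forces the displacements to agree. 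Both directions follow from compactness of $G_\omega$ and continuity of the action.

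First I would establish the approximate homomorphism property. Given $\gamma_1,\gamma_2\in G_i$ and $x$ in a ball slightly smaller than $B(p_i,1/\epsilon_i)$, I apply condition (4) to $\gamma_2$ at $x$, to $\gamma_1$ at $\gamma_2 x$, and to $\gamma_1\gamma_2$ at $x$; using that $\phi_i(\gamma_1)$ acts isometrically to propagate the first estimate, these combine to give
\[
\dist\bigl(\phi_i(\gamma_1\gamma_2)\,f_i(x),\ \phi_i(\gamma_1)\phi_i(\gamma_2)\,f_i(x)\bigr)<3\epsilon_i .
\]
Since $f_i$ is $\epsilon_i$-dense in $B(p_\omega,1/\epsilon_i)$, the two elements $\phi_i(\gamma_1\gamma_2)$ and $\phi_i(\gamma_1)\phi_i(\gamma_2)$ have nearly equal displacement on an ever-larger ball about $p_\omega$, and the first (growing-ball) half of the recognition principle yields $d_\omega(\phi_i(\gamma_1\gamma_2),\phi_i(\gamma_1)\phi_i(\gamma_2))=:\delta_i$ with $\delta_i\to0$ uniformly in $\gamma_1,\gamma_2$. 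The remaining, and main, task is to replace each $\phi_i$ by an honest homomorphism, for which I would use the center-of-mass averaging technique of Grove and Karcher: once the defect is below a threshold governed by the convexity radius of $(G_\omega,d_\omega)$, averaging the values $\phi_i(\gamma\sigma^{-1})\phi_i(\sigma)$ over $\sigma\in G_i$ against normalized Haar measure produces a continuous, hence smooth, homomorphism $\bar\phi_i\co G_i\to G_\omega$ with $\sup_{\gamma}d_\omega(\phi_i(\gamma),\bar\phi_i(\gamma))\to0$. I expect this correction to be the principal obstacle, and it is here that the hypotheses bite: carrying the averaging out uniformly over the \emph{varying} domains $G_i$ requires that the family $\{\phi_i\}$ be equicontinuous and that the groups $G_i$ not degenerate, so that a single threshold and a single modulus serve for all large $i$. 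This is supplied by the uniform upper bound on $\dim G_i$ together with the nonnegative sectional curvature of bi-invariant metrics, which makes the normalized groups $G_i$ a Gromov--Hausdorff precompact family (with limits again compact Lie groups, by the absence of small subgroups) and yields the required uniformity of the linearization of $\phi_i$ near the identity; without such a bound the $G_i$ may acquire unbounded complexity, for instance tori of unbounded rank, and no uniform correction is available.

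Finally I would verify that the substitution costs nothing, working one scale at a time. Fix $\epsilon'>0$; then the radius $1/\epsilon'$ is \emph{fixed}, so the second (fixed-ball) half of the recognition principle applies uniformly on $B(p_\omega,1/\epsilon')$, and since $\sup_\gamma d_\omega(\phi_i,\bar\phi_i)\to0$ the replacement of $\phi_i$ by $\bar\phi_i$ alters the left-hand side of condition (4) on that ball by less than $\epsilon'/2$ once $i$ is large. As conditions (1)--(3) and (5) do not involve $\phi_i$, the triple $(f_i,\bar\phi_i,\psi_i)$, restricted to $B(p_i,1/\epsilon')$, is an $\epsilon'$-approximation in which $\bar\phi_i$ is a homomorphism of Lie groups. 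Since $\epsilon'>0$ was arbitrary, this exhibits the convergence, for large $i$, by the Lie homomorphisms $\bar\phi_i$, which is the assertion.
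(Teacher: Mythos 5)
Your high-level strategy coincides with the paper's: show the maps $G_i\to G_\omega$ are almost homomorphisms with respect to a bi-invariant metric on $G_\omega$, correct them via the Grove--Karcher--Ruh stability theorem, and then check that the correction still demonstrates the convergence. Your first step (the $3\epsilon_i$ defect estimate from condition (4), density of the image of $f_i$, and a compactness ``recognition principle'' for the fixed group $G_\omega$) and your final fixed-ball verification are both sound. The genuine gap is at the step you yourself identify as the main one. The functions in an equivariant Gromov--Hausdorff approximation are \emph{arbitrary}: the definition does not make them continuous, nor even measurable, and the paper stresses this. Consequently your correction procedure --- taking the center of mass of $\phi_i(\gamma\sigma^{-1})\phi_i(\sigma)$ over $\sigma\in G_i$ against Haar measure, equivalently invoking \cite[Theorem 4.3]{gkr} --- is simply undefined for a non-measurable $\phi_i$, and the Grove--Karcher--Ruh theorem explicitly takes a \emph{continuous} almost homomorphism as input. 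You tacitly assume regularity when you ask that the family $\{\phi_i\}$ be ``equicontinuous'', but continuity of even a single $\phi_i$ is precisely what has to be proved. This is where the bulk of the paper's proof lives: a lemma replacing the given function by a continuous one close to it, built by choosing a minimal $\nu_i$--net $A_i\subset G_i$, a subordinate partition of unity $G_i\to\rrr^{A_i}$, a map $\rrr^{A_i}\to\rrr^{B_i}$ induced by pushing net points to a net $B_i\subset G_\omega$, and finally the Grove--Petersen center of mass $\rrr^{B_i}\to G_\omega$, after which one checks that the continuous replacement still satisfies the approximation inequalities.

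This omission also explains why your account of where the hypothesis on $\dim G_i$ enters is misplaced. If continuity of $\phi_i$ were automatic, your argument would never need that hypothesis at all: the defect threshold and the constant $1.36q$ in Grove--Karcher--Ruh depend only on the geometry of the \emph{fixed} target $G_\omega$, and the correction is carried out for each $i$ separately, so no uniformity over the varying domains $G_i$ (precompactness of the family, ``uniform linearization'') is actually required. In the paper the dimension bound is used inside the continuity lemma: together with the nonnegative curvature of bi-invariant metrics and a normalized diameter, Grove--Petersen gives a multiplicity constant $N$, independent of $i$, bounding how many balls of a minimal net meet any given ball; this bounds the number of points fed into the center of mass in $G_\omega$ and guarantees that construction is defined once $3\eta_i < r(1+R+\cdots+R^N)^{-1}$. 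Your proposal can likely be repaired --- for instance, one can first make $\phi_i$ measurable by rounding to a fine net of $G_i$ (fine with respect to the action pseudosemimetric), after which Haar averaging does yield a continuous map --- but any such repair amounts to reconstructing the paper's lemma, so as written the proof is incomplete.
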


The proof of this result relies on the center of mass construction from Grove--Petersen \cite{gphomotopy}, which allows for the construction of continuous maps from discrete ones.
Let us begin this section by reviewing that construction.
The Riemannian manifolds in question will be the compact Lie groups $G_i$ and $G_{\omega}$ with bi-invariant metrics.

Let $(M,g)$ be a complete Riemannian manifold, with $\dim M = n$, $\sec g \geq k$, $\vol (M,g) \geq v$ and $\diam (M,g) \leq D$.

A minimal $\mu$--net for $M$ is defined to be a set of points in $M$ such that the $\mu$--balls cover all of $M$ but the $\frac{\mu}{2}$--balls are disjoint.

It is shown in \cite{gphomotopy} that certain constants $r, R > 0$ and $N \in \nnn$ exist which depend only on $n, k, v$ and $D$, but not on the manifold $M$ itself, so that the following hold:	
\begin{enumerate}
	\item For any minimal $\mu$--net, a ball of radius $\mu$ will have non-empty intersection with at most $N$ of the $\mu$--balls centered on the members of the $\mu$--net.
	$N$ depends only on $n, k$ and $D$.
		
	\item Let $p_1, \ldots p_m \in M$,  and let $\lambda_1, \ldots, \lambda_m > 0$ be weights, so that $\Sigma \lambda_i = 1$. Let $\eta < r(1 + R + \cdots +R^{m-1})^{-1}$. If $\dist(p_i, p_j) < \eta$, $i, j = 1, \ldots , m$, then a center of mass $\mathcal{C}(p_1, \ldots p_m, \lambda_1, \ldots, \lambda_m)$ is defined which depends continuously on the $p_i$ and the $\lambda_i$, is unchanged on dropping any point with weight 0, and satisfies $\dist(\mathcal{C},p_i) < \eta (1 + R + \cdots +R^m)$ for each $i$.
\end{enumerate}

The bi-invariant metric on a Lie group, however, has no place in the definition of equivariant Gromov--Hausdorff convergence.
The natural geometry on the group of isometries derives from how it acts on the metric space.

\begin{definition}
	Let a compact Lie group $G$ act on a pointed metric space $(X,p) \in \m$. 
	Then for each $R > 0$ the \emph{action pseudoseminorm} on $G$ is the continuous map $\left\| \cdot \right\|_R \co G(R) \to \left[ 0, \infty \right) $ given by $$\left\| g \right\|_R = \sup \left\lbrace \dist (x,gx) \st x, gx \in B(p,R)  \right\rbrace.$$
\end{definition}

This is well defined, since for elements of $\m$ the balls centered at $p$ are relatively compact.
$\left\| g \right\|_R = 0$ for any isometry $g$ which fixes the ball of radius $R$.
However, the compactness of $G$ implies that, once $R$ is large enough, the pseudoseminorm vanishes only at the identity.
It is also clear that $\left\| g \right\|_R = \left\| g^{-1} \right\|_R$

The inequality $\left\| gh \right\|_R \leq \left\| g \right\|_R \left\| h \right\|_R$ need not be satisfied, since this supremum might be achieved for some $x \in B(p,R)$ such that $hx \notin B(p,R)$.
However, if $X$ is compact, then once $X \subset B(p,R)$ this inequality is satisfied.

The action pseudoseminorm can be used to define a right-invariant \emph{action pseudosemimetric} on $G$ by $d_R(g,h) = \left\| gh^{-1} \right\|_R$.
(Of course, the construction could also be carried out left-invariantly.)
The preceding comments on the pseudoseminorm easily imply that $d_R$ is non-negative and symmetric.
Once $R$ is sufficiently large, $d_R(g,h)=0 \implies g=h$ and $d_R$ becomes a semimetric.
If $X$ is compact, $d_R$ will also satisfy the triangle inequality for large $R$, and become a true metric, as in the author's earlier work \cite{hequi}.

\begin{proof}[Proof of Theorem \ref{t:equicontinuity}]
	Assume, by passing to a subsequence if necessary, that $\deph\left(  (X_i, p_i, G_i),(X_{\omega}, p_{\omega}, G_{\omega}) \right)  < 1/i$.
	Note that for large enough $i$, $G_i(i)=G_i$.
	
	Choose bi-invariant Riemannian metrics $\sigma_i$ on each $G_i$, for $i = 1, 2, \ldots, \omega$ so that there is a uniform upper bound on the diameter of the groups.
	Let $d_{\sigma_i}$ be the induced distance functions.
	
	Let $d_i$ be the action pseudosemimetric on $G_i$ with respect to $B(p_i,i)$, and let $d_i^{\omega}$ be the action pseudosemimetric on $G_{\omega}$ with respect to $B(p_{\omega},i)$.
	Assume that $i$ is so large that $d_i^{\omega}$ is a semimetric, but note that there can be no similar guarantee for $d_i$.
	
	Consider a sequence of triples $f_i \co B(p_{\omega},i) \to B(p_i,i)$, $\phi_i \co G_{\omega} \to G_i$, $\psi_i \co G_i \to G_{\omega}$, equivariant pointed Gromov--Hausdorff $1/i$--approximations demonstrating the convergence.
	
	\begin{lemma}
		The functions $\psi_i$ may be chosen to be continuous.
	\end{lemma}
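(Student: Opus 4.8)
The plan is to smooth each $\psi_i$ by the Grove--Petersen center of mass construction carried out in the target group $(G_\omega,\sigma_\omega)$. Fix a large $i$, choose a minimal $\mu$--net $\{g_1,\dots,g_m\}$ in $(G_i,d_{\sigma_i})$ and a continuous partition of unity $\{\rho_\alpha\}$ subordinate to the cover by the balls $B(g_\alpha,\mu)$, and define the candidate
\[
	\tilde\psi_i(g) = \mathcal{C}\bigl(\psi_i(g_1),\dots,\psi_i(g_m),\rho_1(g),\dots,\rho_m(g)\bigr),
\]
the center of mass in $G_\omega$ of the images of the net points weighted by the partition of unity. Because $\mathcal{C}$ depends continuously on the points and weights and is unchanged when a point of weight $0$ is dropped, $\tilde\psi_i$ is continuous wherever it is defined; the whole content of the lemma is that it \emph{is} defined and still witnesses the convergence.

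For $\mathcal{C}$ to exist at $g$, the points $\psi_i(g_\alpha)$ with $\rho_\alpha(g)>0$ must lie within the threshold permitted by the construction. At most $N$ weights are nonzero at any $g$ by the overlap estimate (1); moreover the uniform bounds $\dim G_i\le n$, $\sec\sigma_i\ge 0$ and $\diam(G_i,\sigma_i)\le D$ make $N$, and hence the threshold, independent of $i$. The net points carrying these weights are pairwise $d_{\sigma_i}$--close (within $2\mu$), so the real task is to show that $\psi_i$ sends $d_{\sigma_i}$--close points of $G_i$ to $d_{\sigma_\omega}$--close points of $G_\omega$, despite $\psi_i$ being a priori badly discontinuous.

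The needed estimate comes not from any regularity of $\psi_i$ but from the approximation axioms. Combining condition (5) for two elements $\lambda,\nu\in G_i$ with the almost-isometry condition (3), one finds for suitable $y\in B(p_\omega,i)$ that $\dist(\psi_i(\lambda)y,\psi_i(\nu)y)$ is within $O(1/i)$ of $\dist(\lambda f_i(y),\nu f_i(y))$; taking the supremum over such $y$ gives the coarse contraction
\[
	d_i^\omega\bigl(\psi_i(\lambda),\psi_i(\nu)\bigr)\le d_i(\lambda,\nu)+O(1/i)
\]
for the action metrics. On the source this is harmless: for each fixed $i$ the action pseudoseminorm is continuous on the compact group $G_i$, so $d_{\sigma_i}$--closeness forces $d_i$--closeness. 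On the target I would exploit that $G_\omega$ is fixed and that the action metric grows with the radius of the ball: once $i_0$ is large enough that $d_{i_0}^\omega$ separates points it is a continuous metric inducing the manifold topology, so the identity from $d_{i_0}^\omega$ to $d_{\sigma_\omega}$ is uniformly continuous, while $d_i^\omega\ge d_{i_0}^\omega$ for all $i\ge i_0$. Chaining these comparisons with the coarse contraction bounds the $d_{\sigma_\omega}$--spread of $\{\psi_i(g_\alpha)\}$ by a quantity tending to $0$ as $i\to\infty$, so for all large $i$ and fine enough nets it falls below the threshold and $\tilde\psi_i$ is defined.

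It then remains to check that $(f_i,\phi_i,\tilde\psi_i)$ is still an $\epsilon_i$--approximation with $\epsilon_i\to 0$: here $\tilde\psi_i(g)$ lies within a fixed multiple of the above spread of each $\psi_i(g_\alpha)$ in $d_{\sigma_\omega}$, while $\psi_i(g)$ differs from those same $\psi_i(g_\alpha)$ by $O(1/i)$ in $d_i^\omega$, and translating back through the metric comparisons shows that $\tilde\psi_i$ and $\psi_i$ act almost identically on $B(p_\omega,i)$, so condition (5) survives with an error still tending to $0$. I expect this last point to be the main obstacle: the center of mass is controlled only in the bi-invariant metric $d_{\sigma_\omega}$, whereas the approximation axioms are phrased in the action metric $d_i^\omega$, and the two are comparable only with constants that degenerate as the balls $B(p_\omega,i)$ grow; keeping the smoothing error under control therefore hinges on balancing the net scale $\mu$ against the index $i$ so that the gain from $d_i(\lambda,\nu)\to 0$ outweighs this degeneration.
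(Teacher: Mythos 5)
Your construction is essentially the paper's own: a minimal net in $(G_i,\sigma_i)$ at a scale chosen so that $\sigma_i$--closeness forces $d_i$--closeness, a partition of unity subordinate to the covering balls, and the Grove--Petersen center of mass taken in the \emph{fixed} target $(G_\omega,\sigma_\omega)$, with the overlap number $N$ uniform in $i$ thanks to the dimension bound, $\sec\sigma_i\ge 0$ and the diameter normalization. Your argument that the averaged points are $d_{\sigma_\omega}$--close---the coarse contraction from conditions (3) and (5), plus monotonicity $d_i^\omega\ge d_{i_0}^\omega$ and the fact that $d_{i_0}^\omega$ is a continuous metric inducing the topology of the compact group $G_\omega$---is exactly the paper's justification that its target net scales $\eta_i$ can be taken to tend to $0$. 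So $\tilde\psi_i$ exists, is continuous, and is within $\eta_i(3K+4)\to 0$ of $\psi_i$ in the $\sigma_\omega$ metric, just as in the paper.

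The gap is in the last step, which you correctly flag as the main obstacle but then propose to resolve the wrong way. You want condition (5) to survive on all of $B(p_\omega,i)$ with error tending to $0$, and you hope to achieve this by balancing the net scale $\mu$ against $i$. This cannot work: the dominant part of the smoothing error is not the spread caused by the net but the $O(1/i)$ approximation error of $\psi_i$ itself, which after conversion into $d_{\sigma_\omega}$ gives a term ($\eta_i$ in the paper's notation) that is completely independent of $\mu$. Converting $d_{\sigma_\omega}$--closeness back into $d_i^\omega$--closeness at radius $i$ requires a modulus that degenerates as $i\to\infty$ (a rotation moves points at distance $i$ from the basepoint by roughly $i$ times the angle), and nothing in your estimates forces the product of this degeneration with $\eta_i$ to go to zero; for instance, with a torus acting so that one circle factor has unbounded displacement and the other bounded, the chain of bounds produces an error of definite size no matter how fine the net. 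The paper sidesteps this by weakening what is verified: for each \emph{fixed} $j$, uniform continuity of the single semimetric $d_j^\omega$ with respect to $\sigma_\omega$ on the compact group $G_\omega$ turns $d_{\sigma_\omega}(\psi_i(g),\tilde\psi_i(g))<\eta_i(3K+4)$ into $d_j^\omega(\psi_i(g),\tilde\psi_i(g))<1/j$ for all large $i$, and then $f_i|_{B(p_\omega,j/2)}$, $\phi_i$, $\tilde\psi_i$ is a $(2/j)$--approximation. Since $j$ is arbitrary, this still demonstrates the convergence with continuous maps (after a diagonal argument), and no comparison of metrics at radius $i$ is ever needed.
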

	
	\begin{proof}
		For each $i \in \nnn$, let $\nu_i > 0$ be such that $d_{\sigma_i}(g,h) < 2 \nu_i \implies d_i(g,h) < 1/i$.
		Let $A_i$ be a minimal $\nu_i$--net in $(G_i,\sigma_i)$.
		
		Let $\eta_i > 0$ converge to 0, but let each $\eta_i$ be large enough that $d_i^{\omega}(g,h) < 4/i \implies d_{\sigma}(g,h) < \eta_i$.
		This is possible because $d_i^{\omega}$ is increasing with respect to $i$.
		Choose a sequence of minimal $\eta_i$--nets $B_i \subset (G_{\omega},\sigma_{\omega})$.		
		
		By the upper bound on the dimension of $G_i$, and since $k=0$ is a uniform lower bound on the curvature, and $D=1$ is an upper bound on the diameter, there is some $N$ such that for large enough $i$, a ball of radius $\nu_i$ in $G_i$ non-trivially intersects only $N$ of the $\nu_i$--balls centered at elements of $A_i$.
		
		Let $r,R$ be the constants from the center of mass construction \cite{gphomotopy} which are appropriate for $(G_{\omega},\sigma_{\omega})$.
		Write $K=1 + R + \cdots +R^{N}$. 
		Note that for large enough $i$, $3 \eta_i < r/K$.
		
		Define a map $\alpha \co A_i \to B_i$ by mapping $p \in A_i$ to an element of $B_i$ nearest (in the $\sigma_{\omega}$ metric) to $\psi_i(p)$.
		If, for some $p,q \in A_i$, $d_{\sigma_i}(p,q) < 2 \nu_i$, then $d_{\sigma_{\omega}}(\alpha(p),\alpha(q)) < 3 \eta_i$.
		There is an induced map between the Euclidean spaces $\rrr^{A_i} \to \rrr^{B_i}$, where the coordinate associated to any $q \in B_i$ is obtained by summing the co-ordinates for each element of $\alpha^{-1}(q)$.
		
		Then a continuous map $\tilde{\psi_i} \co G_i \to G_{\omega}$ may be defined by composing maps $G_i \to \rrr^{A_i} \to \rrr^{B_i} \to G_{\omega}$.
		
		Let $A_i = \left\lbrace p_i^1 , \ldots, p_i^{\ell} \right\rbrace $ and choose smooth functions $f_i^j \co G_i \to \left[ 0, \infty \right) $, each having their support in the ball of radius $\nu_i$ around $p_i^j$ which sum to 1.
		The map $(G,\rho_i) \to \rrr^{A_i}$ is given by $g \mapsto \left( f_i^j(g)\right) _{j=1}^{l}$.
		Note that points in the image of this map have at most $N$ non-zero coordinates.
		It is possible to assume that $f_i^j (p_i^k) = 0$ whenever $j \neq k$.
		Let us make this assumption, so that elements of $A_i$ are mapped to points with only one non-zero co-ordinate.
		
		The map from $\rrr^{A_i} \to \rrr^{B_i}$ is that induced by $\alpha$, and the map from $\rrr^{B_i} \to (G_{\omega},\rho_{\omega})$ is given by the center of mass construction.
		Note that in the domain points have at most $N$ non-zero coordinates, and the corresponding elements of $B_i$ are at pairwise distance at most $3 \eta_i$, so this map is defined once $i$ is so large that $3 \eta_i < r/K$.
		The set $A_i$ is mapped to $B_i$ by $\tilde{\psi_i}$.
		
		To complete the proof, it is necessary to verify that $\tilde{\psi_i}$ will serve as part of the equivariant pointed Gromov--Hausdorff approximation.
		
		Let $p_i^1, \ldots, p_i^m$ be those elements of $A_i$ within $\nu_i$ of $g$ in the $\sigma_i$ metric. 
		Their images $\tilde{\psi_i}(p_i^1) , \ldots , \tilde{\psi_i}(p_i^m) \in B_i$ are then at most $4\eta_i$ from $\psi_i(g)$ in the $\sigma_{\omega}$ metric.
		The point $\tilde{\psi_i}(g)$ is obtained from the elements of $B_i$ via the center of mass construction, and so is at most $3 \eta_i K$ from those points with non-zero coordinates. 
		This gives a global bound of $\eta_i(3K+4)$ for the difference between $\psi_i$ and $\tilde{\psi_i}$ in the $\sigma_{\omega}$ metric.
		
		Now for each $j \in \nnn$, consider $d_j^{\omega}(\psi_i(g),\tilde{\psi_i}(g))$.
		By continuity of the semimetric $d_j^{\omega}$, for large enough $i$, 
		$$d_{\sigma_{\omega}}(\psi_i(g),\tilde{\psi_i}(g)) < \eta_i(3K+4) \implies d_j^{\omega}(\psi_i(g),\tilde{\psi_i}(g)) < 1/j.$$
		
		Then the triple $f_i \co B(p_{\omega},j/2) \to B(p_i,j/2)$, $\phi_i \co G_{\omega} \to G_i$, $\tilde{\psi_i} \co G_i \to G_{\omega}$ is an equivariant pointed Gromov--Hausdorff $(2/j)$--approximation.
	\end{proof}
	
	Returning to the proof of the theorem, by the monotonicity of the action semimetrics on $G_{\omega}$, and their continuity with respect to the bi-invariant metric, it is clear that for large $i$ the (now assumed to be continuous) map $\psi_i$ will be an almost homomorphism in the sense of Grove--Karcher--Ruh \cite{gkr}. 
	That is to say, for each $g,h \in G_{i}$, $d_{\sigma_{\omega}}(\psi_i(gh)\psi_i(h)^{-1},\psi_i(g)) \leq q$ for a fixed small $q$. 
	By \cite[Theorem 4.3]{gkr}, there is then a continuous group homomorphism within $1.36q$ of $\psi_i$, and again by continuity of $d_j^{\omega}$, for large enough $i$ this homomorphism may be used in place of $\psi_i$.
\end{proof}

\section{Immersed subgroups}

In this section, we address the question of when the homomorphism $\psi_i$ can be chosen to be a monomorphism, so that $G_i$ can be identified with a subgroup of $G_{\omega}$.
In other words, we wish to understand for which convergent sequences in $\meq$ symmetries are preserved, and for which sequences symmetries are lost.

As an application, Proposition \ref{p:maxvol} bounds the symmetries of manifolds of almost maximal volume for their curvature and radius.

Let $H_i$ be the kernel of $\psi_i$. 
Then $(X_i,H_i) \to (X_{\omega},1)$ in $\meq$. 
It follows that $X_i/H_i \to X_{\omega}$ in the Gromov--Hausdorff topology. 
Combining these two convergences, it is clear that the projection map $X_i \to X_i/H_i$ is a Gromov--Hausdorff $\epsilon_i$--approximation for some $\epsilon_i \to 0$, so that the diameter of the orbits of the kernel converges to $0$.

If $H_i$ is non-trivial, then some symmetries are lost, and there are two very natural ways for such actions to arise.
The first is where the sequence of spaces $(X_i,p_i,H_i)$ is obtained by shrinking the group orbits, such as in a Cheeger deformation.

However, this can also occur where the space has an unusual topology.
Consider, for example, the infinite wedge of 2-spheres with a Hawaiian earring topology.
Let $X_{\omega}$ be this space endowed with a metric which has no non-trivial isometries.
Let $X_i$ be isometric to $X_{\omega}$, except that the $i^\textrm{th}$ sphere has a round metric.
Then the isometry group of $X_i$ is $\Orth{2}$, and the limit of $(X_i, \Orth{2})$ is $(X_{\omega}, 1)$.

The following proposition shows that given a lower curvature bound, unless the sequence collapses symmetries are always preserved.
A suitable sense of lower curvature bound is that from Alexandrov geometry.
An Alexandrov space of curvature $\geq k$ is a generalization of a Riemannian manifold with sectional curvature $\geq k$.
Very roughly, it is a metric space in which triangles are ``fatter'' than triangles with the same side-lengths in constant curvature $k$.
The reader is referred to \cite{bgp} for the definition and basic ideas.

The subspace of $\meq$ under consideration is then $$\Omega^n_k = \left\lbrace (X,p,G) \in \meq \st X \textrm{ has curvature } \geq k, \dim X = n, G \textrm{ is compact}\right\rbrace.$$

\begin{proposition}\label{p:preservation}
	Let $(X_i,p_i,G_i)$ be a sequence converging in $\Omega^n_k$. Then the limit group of isometries contains an isomorphic image of $G_i$, for large $i$. 
\end{proposition}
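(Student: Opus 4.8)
The plan is to prove the sharper statement that the Lie group homomorphism $\psi_i \co G_i \to G_{\omega}$ supplied by Theorem \ref{t:equicontinuity} is injective for all large $i$; its image is then the asserted isomorphic copy of $G_i$ inside $G_{\omega}$. Writing $H_i = \ker \psi_i$, the discussion preceding the proposition already records that $(X_i, H_i) \to (X_{\omega}, 1)$, that the orbit diameters of $H_i$ tend to $0$, and that the projections $X_i \to X_i/H_i$ are Gromov--Hausdorff $\epsilon_i$--approximations with $\epsilon_i \to 0$, so that $X_i/H_i \to X_{\omega}$. Since $H_i$ is a compact group of isometries, each quotient $X_i/H_i$ is again an Alexandrov space of curvature $\geq k$ \cite{bgp}; and because the convergence takes place within $\Omega^n_k$ (this is the non-collapsing hypothesis implicit in the phrase ``unless the sequence collapses''), the limit satisfies $\dim X_{\omega} = n$. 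Thus both $X_i \to X_{\omega}$ and $X_i/H_i \to X_{\omega}$ are \emph{non-collapsing}: the dimension of the common limit is exactly $n$.

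First I would dispose of positive-dimensional kernels by counting dimensions. If $\dim H_i > 0$, then, the action being effective, the principal orbits are positive-dimensional and $\dim(X_i/H_i) \leq n-1$. Since the dimension of a Gromov--Hausdorff limit of spaces with a fixed lower curvature bound cannot exceed the $\liminf$ of the dimensions of the sequence \cite{bgp}, this would force $\dim X_{\omega} \leq n-1$, contradicting $\dim X_{\omega} = n$. Hence $\dim H_i = 0$, that is, $H_i$ is finite, for all large $i$, and consequently $\dim(X_i/H_i) = n$.

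The remaining case, a non-trivial \emph{finite} kernel, is the genuine one, since a finite quotient preserves dimension and the collapse must instead be detected through volume. Here I would compare volumes. For a non-trivial $g \in H_i$ the fixed-point set has empty interior (otherwise $g$ would be the identity), hence is of lower dimension and measure zero; the non-free locus of the finite action is therefore negligible, and $\vol(X_i) = |H_i| \cdot \vol(X_i/H_i)$, or, in the pointed setting, the same relation between the volumes of concentric balls once the $\epsilon_i$--small orbits are accounted for. Because both convergences above are non-collapsing, continuity of volume under non-collapsing Alexandrov convergence \cite{bgp} yields $\vol B(p_i, r) \to \vol B(p_{\omega}, r)$ and $\vol B(\bar p_i, r) \to \vol B(p_{\omega}, r)$, with common limit strictly positive since $\dim X_{\omega} = n$. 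Dividing gives $|H_i| \to 1$, so $H_i$ is trivial for large $i$ and $\psi_i$ is a monomorphism.

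The main obstacle is exactly this last step. A positive-dimensional kernel is killed cheaply by dimension, but a finite group with uniformly shrinking orbits leaves the dimension of the quotient unchanged, so no purely topological or dimension-theoretic argument can eliminate it; one is forced to use the non-collapsing hypothesis quantitatively, through continuity of volume, together with the standard (but not wholly trivial) fact that the singular set of the finite isometric action is of measure zero, so that volume multiplies exactly by the order of the group.
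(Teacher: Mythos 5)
Your proposal is correct and takes essentially the same route as the paper: identify the kernel $H_i$ of the homomorphism from Theorem \ref{t:equicontinuity}, observe that $X_i/H_i$ and $X_i$ are both sequences of curvature $\geq k$ Alexandrov spaces converging to the same $n$-dimensional limit, and rule out a non-trivial kernel by continuity of Hausdorff measure under non-collapsed convergence. The paper compresses your entire case analysis (dimension semicontinuity to exclude positive-dimensional kernels, volume multiplicativity over the measure-zero singular set to exclude finite ones) into a single terse appeal to \cite{bgp}, so your write-up is in effect a correct expansion of its one-line argument.
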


\begin{proof}
	Let $H_i \nsub G_i$ be the kernel of the homomorphism given by Theorem \ref{t:equicontinuity}, so that $(X_i, p_i, H_i) \to (X, p, 1)$. Then $X_i/H_i \to X$. However, $X_i/H_i$ is also a sequence of Alexandrov spaces with curvature bounded below by $k$, and so by the continuity of Hausdorff measure on $\Omega_k^n$ \cite{bgp}, $H_i$ is trivial for large $i$. 
\end{proof}

Let us restrict our attention further to those Alexandrov spaces with compact isometry groups: the space $\mathrm{Alex}^c(k,n) \subset \m$ given by $$\left\lbrace (X,p) \in \m \st X \textrm{ has curvature } \geq k, \dim X = n, \isom X \textrm{ is compact} \right\rbrace.$$
Recalling that the symmetry degree of a metric space is the dimension of its full isometry group, Proposition \ref{p:preservation} has the following corollary:

\begin{corollary}[Semicontinuity of symmetries]\label{c:semicont}
	The symmetry degree is upper semicontinuous on $\mathrm{Alex}^c(k,n)$.
\end{corollary}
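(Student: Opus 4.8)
The plan is to derive the corollary directly from Proposition~\ref{p:preservation} via the sequential characterisation of upper semicontinuity. Since the symmetry degree takes values in the nonnegative integers, and since the isometry group of an $n$-dimensional Alexandrov space has dimension bounded above in terms of $n$ alone (by $\binom{n+1}{2}$), the symmetry degree is uniformly bounded on $\mathrm{Alex}^c(k,n)$. Hence it suffices to show that for every convergent sequence $(X_i,p_i) \to (X_\omega,p_\omega)$ in $\mathrm{Alex}^c(k,n)$ one has $\limsup_i \dim \isom X_i \leq \dim \isom X_\omega$.

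First I would fix such a sequence and set $s = \limsup_i \dim \isom X_i$, which is finite by the bound above. Passing to a subsequence, I may assume that $\dim \isom X_i = s$ for every $i$. Now equip each $X_i$ with its full isometry group $G_i = \isom X_i$; by membership in $\mathrm{Alex}^c(k,n)$ these are compact, and being isometry groups of Alexandrov spaces they are compact Lie groups, with the uniform dimension bound just noted. This places each triple $(X_i,p_i,G_i)$ in $\Omega^n_k$. Since the underlying pointed spaces converge in the pointed Gromov--Hausdorff topology, the equivariant precompactness result \cite[Proposition 3.6]{fyannals} provides a further subsequence converging in the equivariant pointed Gromov--Hausdorff topology to some $(X_\omega, p_\omega, \hat{G})$, where $\hat{G}$ is a closed, and hence compact Lie, group of isometries of $X_\omega$.

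At this point Proposition~\ref{p:preservation} applies to the convergent sequence $(X_i,p_i,G_i)$ in $\Omega^n_k$: for large $i$ the limit group $\hat{G}$ contains an isomorphic image of $G_i$, so that $\dim \hat{G} \geq \dim G_i = \dim \isom X_i = s$. The concluding observation is then simply that $\hat{G}$, being a group of isometries of $X_\omega$, is a closed subgroup of the full isometry group, whence $\dim \isom X_\omega \geq \dim \hat{G} \geq s$. This is exactly the inequality required for upper semicontinuity.

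The main subtlety to keep in mind is that the equivariant limit $\hat{G}$ need not be all of $\isom X_\omega$: the full isometry groups $\isom X_i$ may well converge to a proper subgroup of the isometry group of the limit, since additional symmetry can appear in the limit without being captured by any $G_i$. The argument sidesteps this precisely because only the inequality $\dim \hat{G} \leq \dim \isom X_\omega$ is needed, and that holds automatically for any subgroup. The one external fact being leaned on is that the isometry group of an Alexandrov space of curvature bounded below is a Lie group, which is what allows Proposition~\ref{p:preservation}, and through it Theorem~\ref{t:equicontinuity}, to be invoked for the full isometry groups $G_i$.
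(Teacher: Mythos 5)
Your proposal is correct and is exactly the argument the paper intends: the corollary is stated as an immediate consequence of Proposition~\ref{p:preservation}, and your write-up supplies the standard details left implicit there (equipping the $X_i$ with their full isometry groups, invoking the equivariant precompactness result \cite[Proposition 3.6]{fyannals}, applying the proposition to embed $G_i$ into the limit group, and noting that the limit group is a closed subgroup of $\isom X_\omega$). The external facts you lean on --- that isometry groups of finite-dimensional Alexandrov spaces are Lie groups of dimension at most $\binom{n+1}{2}$ --- are the same ones the paper itself relies on, via its citation of \cite{GGGIsom}.
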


The following question is natural:

\begin{question}
	To what extent can the lower curvature bounds in Proposition \ref{p:preservation} and its Corollary \ref{c:semicont} be relaxed?
\end{question}

For example, if the $X_i$ are manifolds with only a lower Ricci curvature bound, that property is not necessarily inherited by the quotient spaces $X_i / H_i$, and so the proof of the Proposition would fail.

\subsection{Voluminous manifolds}

According to Grove and Petersen \cite{gpmaxvol}, the volume of a manifold can be controlled by a lower bound on the sectional curvature and by the radius. The radius of a metric space $X$ is the invariant $\rad X = \min_p \max_q \dist(p,q)$.

\begin{theorem}[Grove--Petersen \cite{gpmaxvol}]\label{t:maxvol}
	Fix a real number $k$, a positive $r$ ($\leq  \pi/\sqrt{k}$ if $k > 0$), and an integer
	$n \geq 2$. Then there is an $w=w(k, r, n)>0$ such that if $M$ is a Riemannian $n$-manifold with $\sec M \geq k$ and $\rad M \leq r$, then $\vol M \leq w(k,r,n)$. Furthermore, if $\vol M$ is sufficiently close to $w(k,r,n)$ then $M$ is topologically either $S^n$ or $\rpn$. 
\end{theorem}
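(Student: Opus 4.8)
The plan is to treat the two assertions separately: the uniform volume bound, which is a soft consequence of relative volume comparison, and the topological rigidity near the maximum, which I would obtain by a compactness-and-stability argument in the category of Alexandrov spaces. After rescaling I may assume $k \in \{-1,0,1\}$, so that the constraint reads $r \le \pi$ when $k=1$. For the bound, note that $\rad M \le r$ means precisely that there is a point $p \in M$ with $M = \bar B(p,r)$. Since $\sec M \ge k$ forces $\mathrm{Ric}\, M \ge (n-1)k$, the Bishop--Gromov relative volume comparison applies to the balls $B(p,\rho)$: the ratio of $\vol B(p,\rho)$ to the volume $v_k(\rho)$ of a $\rho$--ball in the simply connected space form of curvature $k$ is non-increasing in $\rho$ and tends to $1$ as $\rho \to 0$. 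Taking $\rho = r$ gives $\vol M = \vol B(p,r) \le v_k(r)$. I would therefore set $w(k,r,n) = v_k(r)$, which is finite and, as the smooth models $\rpn$ (for $k=1$, $r=\pi/2$) and $S^n$ (for $k=1$, $r=\pi$) show, is sharp and in general approached by rotationally symmetric ``closed-off ball'' metrics on $S^n$.

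For the rigidity I would argue by contradiction. Suppose there were manifolds $M_i$ with $\sec \ge k$, $\rad \le r$, and $\vol M_i \to w$, but with $M_i$ homeomorphic to neither $S^n$ nor $\rpn$ for every $i$. Since $\diam M_i \le 2r$ and $\dim M_i = n$, Gromov's precompactness theorem lets me pass to a subsequence $M_i \to Y$, where $Y$ is an Alexandrov space of curvature $\ge k$ with $\dim Y \le n$, and $\rad Y \le r$ is inherited in the limit (for a limit $p$ of the centers $p_i$ and any $q = \lim q_i$ one has $\dist(p,q) = \lim \dist(p_i,q_i) \le \limsup \rad M_i \le r$). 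Because $\vol M_i \to w > 0$, the convergence cannot be collapsing, since collapse sends the $n$--dimensional Hausdorff measure to $0$; hence $\dim Y = n$, and by the continuity of $\mathcal{H}^n$ along non-collapsing limits (the same principle used in Proposition \ref{p:preservation}) one gets $\mathcal{H}^n(Y) = w = v_k(r)$.

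It remains to identify $Y$ and transfer its topology back to $M_i$. The space $Y$ is an $n$--dimensional Alexandrov space of curvature $\ge k$ realizing equality $\mathcal{H}^n(\bar B(p,r)) = v_k(r)$ in volume comparison about its center $p$; the rigidity case of Bishop--Gromov then forces $\bar B(p,r)$, away from the far set $\{\dist(p,\cdot)=r\}$, to be isometric to the corresponding punctured ball in the space form, so that $Y$ is the model ball $\bar B(r)$ with its boundary sphere $S^{n-1}$ either collapsed to a point or identified by a fixed-point-free isometric involution. Compatibility of the identification with the lower curvature bound leaves only these two possibilities, which are topologically $S^n$ and $\rpn$ respectively. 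Finally, since $M_i \to Y$ is a non-collapsing convergence of manifolds to an Alexandrov space, Perelman's stability theorem yields a homeomorphism $M_i \cong Y$ for all large $i$, contradicting the choice of the $M_i$.

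The hard part is the rigidity classification in the last paragraph: extracting from equality in volume comparison that the limit is exactly a space-form ball with its boundary glued, and then ruling out every gluing except the involution and the collapse to a point. This is where the Alexandrov geometry of the far set, the structure of its space of directions, and the constraint imposed by curvature $\ge k$ all have to be combined. By comparison, the soft precompactness, the non-collapsing dichotomy for $\mathcal{H}^n$, and the invocation of stability are comparatively routine once this geometric normal form is in hand.
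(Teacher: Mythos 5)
Your overall architecture---a volume comparison bound, Gromov compactness plus non-collapsing to obtain an $n$-dimensional Alexandrov limit of maximal volume, identification of that extremal limit, and Perelman's stability theorem to transfer its topology back to the manifolds---is exactly the Grove--Petersen strategy that the paper sketches. However, your choice of constant is wrong, and this is not cosmetic. You set $w(k,r,n) = v_k^n(r)$ and assert that this is ``sharp and in general approached.'' That is false precisely when $k>0$ and $\pi/(2\sqrt{k}) < r < \pi/\sqrt{k}$: there the ball of radius $r$ in the model sphere is no longer convex, no manifold (or Alexandrov space) with $\sec \geq k$ and $\rad \leq r$ can retain all of its volume---as the paper puts it, ``some volume must always be lost''---and the true maximum $w(k,r,n) < v_k^n(r)$ is attained by the ``lemon,'' which your argument never sees. (Concretely, any rotationally symmetric attempt to close off the $r$-ball while keeping $\sec \geq k$ forces the warping function to continue no faster than $\sin$, so the closing tip sits at distance $\geq \pi/\sqrt{k}$ from the center, violating the radius bound.) With your non-sharp $w$, the ``furthermore'' clause of the theorem becomes vacuously true in this regime, so what you prove is strictly weaker than Grove--Petersen's statement---and in particular too weak to support the paper's Proposition \ref{p:maxvol}, which applies exactly to radii in that range.

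The second error is in the classification step, which you rightly flag as the hard part but then assert incorrectly. In the regime where $v_k^n(r)$ is the correct bound, the extremal spaces are the ``crosscap'' (boundary of the model ball identified by the antipodal map, giving $\rpn$---this is your fixed-point-free involution) and the ``purse'' (boundary identified by a reflection through a hyperplane, giving $S^n$); the latter involution has fixed points, so your dichotomy excludes a genuine extremal space. Meanwhile the alternative you offer in its place---the model ball with its boundary sphere collapsed to a point---is not an Alexandrov space of curvature $\geq k$ at all: taking three points just inside three mutually distant boundary points, the comparison angles they subtend at the collapsed point each tend to $\pi$, so their sum exceeds $2\pi$. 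Thus your classification simultaneously rules out limits that do occur (sequences converging to the purse) and admits a space that cannot occur. Your final conclusion that the limit is topologically $S^n$ or $\rpn$ survives only by coincidence, because the purse happens also to be a topological sphere; the argument you give for it does not.
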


It is clear that an upper bound exists. 
A simple volume comparison gives that $\vol M \leq v_k^n(r)$, the volume of the ball of radius $r$ in the simply connected $n$--dimensional space form of constant curvature $k$. 

In the case of $k > 0$ and $\frac{\pi}{2\sqrt{k}} < \rad M \leq \frac{\pi}{\sqrt{k}}$, the bound is not sharp. 
Here some volume must always be lost, and the manifolds with volume close to the bound $w$ are always homeomorphic to spheres.
However, when $k \leq \frac{\pi}{2\sqrt{k}}$, both topologies admit metrics with volume close to $w$.

Riemannian manifolds can only achieve this volume bound in the case of round spheres and projective spaces.
However, the key observation in proving the theorem is that there is always an Alexandrov space of curvature $\geq k$, radius $\leq r$ and dimension $n$ which achieves the volume $w(k,r,n)$. 
Then any sequence of Riemannian manifolds with volume converging to $w$ must have one of these spaces as its limit, and so be homeomorphic to it by Perelman's Stability Theorem \cite{perstab}.

There are only three types of Alexandrov spaces of maximal volume.
More thorough descriptions of these types are given in \cite{gpmaxvol}. 
Two of these types achieve the volume $v_k^n(r)$.
These are the ``crosscap'' (the disk of radius $r$ with antipodal points on the boundary identified), which is homeomorphic to $\rpn$, and isometric in case $r=\frac{\pi}{2\sqrt{k}}$, and the ``purse'' (the disk of radius $r$, but with boundary points identified by a reflection through a hyperplane), which is homeomorphic to $S^n$.

In case $k>0$ and $r>\frac{\pi}{2\sqrt{k}}$, the maximal volume $w(k,r,n)$ is attained by the ``lemon''.
The lemon is obtained by removing a wedge (that is, a connected component of the complement of two totally geodesic hypersurfaces) from the round sphere $S^n$ of curvature $k$, and then identifying points on the boundary via a reflection.
The lemon is homeomorphic to the sphere, and when $r=\frac{\pi}{\sqrt{k}}$ it is isometric to a round sphere.

The isometry groups of each of these spaces are easily calculated.
Let the dimension of the space be $n$.
In case the space is smooth ($k>0$ and $r=\frac{\pi}{2\sqrt{k}}$ or $\frac{\pi}{\sqrt{k}}$), it is homogeneous with isometry group $\Orth{n+1}$ (with ineffective kernel of order 2 in the $\rpn$ case).

Otherwise, the crosscap has isometry group $\Orth{n}$.
Galaz-Garcia and Guijarro have shown that, just as with Riemannian manifolds, this is the largest possible dimension for the isometry group of a non-homogeneous Alexandrov space \cite{GGGIsom}.
The purse and the lemon, however, have as isometry group the direct product $\zzz_2 \times \Orth{n-1}$.

Combining the information on the isometry groups of these spaces with Proposition \ref{p:preservation}, one obtains the following result on the symmetries of Riemannian manifolds with almost maximal volume.

\begin{proposition}\label{p:maxvol}
	Fix a real number $k$, a positive $r$ (in case $k>0$, requiring $r < \pi/\sqrt{k}$ and  $\neq  \pi/2\sqrt{k}$), and an integer $n \geq 2$. 
	Any Riemannian $n$-manifold $M$ with $\sec (M,g) \geq k$ and $\rad M \leq r$ and $\vol M$ sufficiently close to $w(k,r,n)$ is either $\rpn$ with $\isom M \subset \Orth{n}$ or $S^n$ with $\isom M \subset \zzz_2 \times \Orth{n-1}$. 
\end{proposition}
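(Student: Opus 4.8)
The plan is to argue by contradiction through a compactness and limiting argument, with Proposition~\ref{p:preservation} supplying the mechanism by which symmetries survive passage to the limit. Suppose the conclusion fails. Then for each $j$ there is a closed Riemannian $n$-manifold $M_j$ with $\sec M_j \geq k$, $\rad M_j \leq r$ and $\vol M_j > w(k,r,n) - 1/j$, for which neither of the two asserted alternatives holds. Each $M_j$ is compact, since $\rad M_j \leq r$ gives $\diam M_j \leq 2r$; hence by Myers--Steenrod $\isom M_j$ is a compact Lie group. Moreover the isometry group of an $n$-manifold has dimension at most $\binom{n+1}{2}$, so the $\isom M_j$ obey a uniform bound on dimension, which is the hypothesis underlying Theorem~\ref{t:equicontinuity} and hence Proposition~\ref{p:preservation}.

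First I would extract a limit. The family $(M_j)$ satisfies $\sec \geq k$, $\diam \leq 2r$, and $\vol \geq w/2$ for large $j$, so by Gromov's compactness theorem~\cite{gromov} a subsequence converges in the Gromov--Hausdorff topology to an Alexandrov space $X_\omega$ of curvature $\geq k$. Choosing basepoints $p_j \in M_j$ (harmless for compact spaces) and passing to a further subsequence, the equivariant compactness result \cite[Proposition~3.6]{fyannals} lets me assume $(M_j, p_j, \isom M_j) \to (X_\omega, p_\omega, G_\omega)$ in the equivariant pointed Gromov--Hausdorff topology, with $G_\omega \subseteq \isom X_\omega$.

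Next I would identify the limit. Because $\vol M_j \to w > 0$, continuity of the $n$-dimensional Hausdorff measure on spaces of curvature $\geq k$~\cite{bgp} forces $X_\omega$ to be $n$-dimensional (ruling out collapse) with $\vol X_\omega = w(k,r,n)$. Thus the sequence lies in $\Omega^n_k$ and $X_\omega$ realises the maximal volume. By the Grove--Petersen classification recalled above, and because the hypotheses $r < \pi/\sqrt{k}$ and $r \neq \pi/2\sqrt{k}$ (when $k>0$) exclude the smooth homogeneous spaces with isometry group $\Orth{n+1}$, the space $X_\omega$ is either a crosscap, homeomorphic to $\rpn$ with $\isom X_\omega = \Orth{n}$, or else a purse or lemon, homeomorphic to $S^n$ with $\isom X_\omega = \zzz_2 \times \Orth{n-1}$.

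Finally I would apply Proposition~\ref{p:preservation} to this sequence in $\Omega^n_k$: for large $j$ the group $G_\omega$ contains an isomorphic image of $\isom M_j$, and since $G_\omega \subseteq \isom X_\omega$ this embeds $\isom M_j$ into $\Orth{n}$ in the crosscap case, or into $\zzz_2 \times \Orth{n-1}$ in the purse or lemon case. By Perelman's Stability Theorem~\cite{perstab} the manifold $M_j$ is homeomorphic to $X_\omega$ for large $j$, hence to $\rpn$ respectively $S^n$. For large $j$ this exhibits exactly one of the two asserted alternatives, contradicting the choice of $M_j$. The main obstacle is not conceptual but consists of correctly identifying the limit: confirming that no collapse occurs and that the limit lies in the maximal-volume family, and then matching each limit type to the correct topology and isometry-group bound. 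Once that is done, the essential point---that a lower curvature bound prevents symmetries from being destroyed in the limit---is furnished entirely by Proposition~\ref{p:preservation}.
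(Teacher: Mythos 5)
Your proposal is correct and follows essentially the same route as the paper: the paper's (very terse) argument is exactly to note that an almost-maximal-volume sequence must converge to one of the Grove--Petersen maximal-volume Alexandrov spaces (crosscap, purse, or lemon, with the smooth cases excluded by the hypotheses on $r$), apply Perelman's Stability Theorem for the topological conclusion, and apply Proposition~\ref{p:preservation} to embed $\isom M_j$ into the known isometry group of the limit. Your write-up merely fills in the standard details (contradiction setup, Gromov and equivariant compactness, volume continuity ruling out collapse), all of which are sound.
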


The largest possible isometry group can always be achieved, simply by carrying out the smoothing construction given in \cite{gpmaxvol} in an equivariant way.

In the $\rpn$ case, the Proposition states merely that $M$ is not homogeneous, by \cite{GGGIsom}.

It is the symmetry gap between the two topological types that is of more interest.
A round $\rpn$ has a larger volume than a round $S^n$ of the same radius.
The gap might be read as an expression of this fact, quantitatively expressing the idea that volume can be maximized in $\rpn$ more naturally, whereas maximizing it in $S^n$ involves forcing volume into the space in a way which destroys more symmetries.

\bibliographystyle{amsabbrv}
\bibliography{C:/Users/John/mybib}

\def\cprime{$'$}
\providecommand{\bysame}{\leavevmode\hbox to3em{\hrulefill}\thinspace}
\providecommand{\MR}{\relax\ifhmode\unskip\space\fi MR }
\providecommand{\MRhref}[2]{%
  \href{http://www.ams.org/mathscinet-getitem?mr=#1}{#2}
}
\providecommand{\href}[2]{#2}
\begin{thebibliography}{10}

\bibitem{bestvina}
M.~Bestvina, \emph{Degenerations of the hyperbolic space}, Duke Math. J.
  \textbf{56} (1988), no.~1, 143--161. \MR{932860 (89m:57011)}

\bibitem{bgp}
Y.~Burago, M.~Gromov, and G.~Perelman, \emph{A. {D}. {A}leksandrov spaces with
  curvatures bounded below}, Uspekhi Mat. Nauk \textbf{47} (1992), no.~2(284),
  3--51, 222, Translation in \emph{Russian Math. Surveys}, 47(2):1--58, (1992).
  \MR{1185284 (93m:53035)}

\bibitem{enniswei}
J.~Ennis and G.~Wei, \emph{Describing the universal cover of a compact limit},
  Differential Geom. Appl. \textbf{24} (2006), no.~5, 554--562. \MR{2254056
  (2008f:53046)}

\bibitem{fukaya}
K.~Fukaya, \emph{Theory of convergence for {R}iemannian orbifolds}, Japan. J.
  Math. (N.S.) \textbf{12} (1986), no.~1, 121--160. \MR{914311 (89d:53083)}

\bibitem{fyannals}
K.~Fukaya and T.~Yamaguchi, \emph{The fundamental groups of almost
  non-negatively curved manifolds}, Ann. of Math. (2) \textbf{136} (1992),
  no.~2, 253--333. \MR{1185120 (93h:53041)}

\bibitem{GGGIsom}
F.~Galaz-Garcia and L.~Guijarro, \emph{Isometry groups of {A}lexandrov spaces},
  Bull. Lond. Math. Soc. \textbf{45} (2013), no.~3, 567--579. \MR{3065026}

\bibitem{gromov}
M.~Gromov, \emph{Groups of polynomial growth and expanding maps}, Inst. Hautes
  \'Etudes Sci. Publ. Math. \textbf{53} (1981), 53--73. \MR{623534 (83b:53041)}

\bibitem{gkr}
K.~Grove, H.~Karcher, and E.~A. Ruh, \emph{Group actions and curvature},
  Invent. Math. \textbf{23} (1974), 31--48. \MR{0385750 (52 \#6609)}

\bibitem{gphomotopy}
K.~Grove and P.~Petersen, \emph{Bounding homotopy types by geometry}, Ann. of
  Math. (2) \textbf{128} (1988), no.~1, 195--206. \MR{951512 (90a:53044)}

\bibitem{gpmaxvol}
K.~Grove and P.~Petersen, V, \emph{Volume comparison \`a\ la {A}leksandrov},
  Acta Math. \textbf{169} (1992), no.~1-2, 131--151. \MR{1179015 (93j:53057)}

\bibitem{hequi}
J.~Harvey, \emph{{Equivariant stability of {A}lexandrov spaces}},
  arXiv:1401.0531 [math.DG], 2014.

\bibitem{horbi}
\bysame, \emph{{Orbifold finiteness under geometric and spectral constraints}},
  arXiv:1401.0739 [math.DG], 2014.

\bibitem{paulin}
F.~Paulin, \emph{Topologie de {G}romov \'equivariante, structures hyperboliques
  et arbres r\'eels}, Invent. Math. \textbf{94} (1988), no.~1, 53--80.
  \MR{958589 (90d:57015)}

\bibitem{perstab}
G.~Perelman, \emph{{{A}lexandrov's spaces with curvatures bounded from below
  {II}}}, Preprint available at
  \url{http://www.math.psu.edu/petrunin/papers/papers.html}, 1991.

\end{thebibliography}

\end{document}